\documentclass[a4paper,english,oneside]{amsart}
\usepackage[utf8]{inputenc}
\usepackage[T1]{fontenc}
\usepackage[hscale=0.68, vscale=0.7]{geometry}
\usepackage[numbers]{natbib}

\usepackage{amssymb}
\usepackage{mathtools}

\DeclareFontFamily{OMX}{mlmex}{}
\DeclareFontShape{OMX}{mlmex}{m}{n}{%
   <->mlmex10%
   }{}%
\usepackage{mlmodern}

\theoremstyle{plain}
\newtheorem{theorem}{Theorem}
\newtheorem{proposition}[theorem]{Proposition}

\theoremstyle{definition}

\newtheorem{remark}[theorem]{Remark}

\usepackage{color}

\usepackage[np,autolanguage]{numprint}
\AtBeginDocument{\npthousandthpartsep{\,}}

\usepackage{hyperref}
\hypersetup{%
  colorlinks=true,%
  citecolor=[RGB]{120,29,126},
  pdfauthor={Jean-François Burnol},%
  pdfsubject={Digamma function, Kempner series},%
  pdfstartview=FitH,%
  pdfpagemode=UseNone,%
}

\newcommand\plusbas{\vphantom{X^X}}

\newcommand\cX[1][]{%
  \if\relax\detokenize{#1}\relax\else{}_{\plusbas#1}\mskip-1mu\relax\fi
  \mathcal{X}%
}
\newcommand\cY[1][]{%
  \if\relax\detokenize{#1}\relax\else{}_{\plusbas#1}\mskip-1mu\relax\fi
  \mathcal{Y}%
}
\newcommand\cA{\mathcal{A}}

\newcommand\sD[1][]{%
  \if\relax\detokenize{#1}\relax\else{}_{\plusbas#1}\mskip-1mu\relax\fi
  \mathsf{D}%
}
\newcommand\bsD[1][]{%
  \if\relax\detokenize{#1}\relax\else{}_{\plusbas#1}\mskip-1mu\relax\fi
  \boldsymbol{\mathsf{D}}%
}

\newcommand\dx{\mathrm{d}\mskip-1mu x}

\allowdisplaybreaks


\usepackage{setspace}

\title[Digamma and Kempner]{%
  Digamma function and general Fischer series in the theory of Kempner sums}

\author[J.-F. Burnol]{Jean-François Burnol}

\address{Université de Lille,
  Faculté des Sciences et technologies,
  Département de mathématiques,
  Cité Scientifique,
  F-59655 Villeneuve d'Ascq cedex,
  France}
\email{jean-francois.burnol@univ-lille.fr}
\date{v1-2: March 6-7, 2024; v3: April 27, 2024}


\subjclass[2020]{Primary 11Y60, 11M06; Secondary 11A63, 44A60, 30C10, 41A60;}
\keywords{Ellipsephic numbers, Kempner series, digamma function}

\usepackage{eso-pic}
\usepackage{xcolor}
\AddToShipoutPictureFG*{%
\AtTextUpperLeft{\raisebox{1cm}{{\upshape\textcolor{gray}{\texttt{VERSION OF
      27-04-2024 19:17:15 CEST}}}}}%
}

\begin{document}

\begin{abstract}
  The harmonic sum of the integers which are missing $p$ given digits in a
  base $b$ is expressed as $b \log(b)/p$ plus corrections indexed by the
  excluded digits and expressed as integrals involving the digamma function
  and a suitable measure.  A number of consequences are derived, such as
  explicit bounds, monotony, series representations and asymptotic expansions
  involving the zeta values at integers, and suitable moments of the measure.
  In the classic Kempner case of $b=10$ and $9$ as the only excluded digit,
  the series representation turns out to be exactly identical with a result
  obtained by Fischer already in 1993.  Extending this work is indeed the goal
  of the present contribution.
\end{abstract}

\maketitle

\onehalfspacing

\section{Introduction}

Let $b$ be an integer at least equal to $2$ and
$A\subset\sD=\{0,\dots,b-1\}$ a set of digits, and $E=\sD\setminus A$.  It is
assumed throughout that $E$ is not empty.

We call $A$ the set of admissible digits and $E$ the set of excluded digits.
We let $K(b,E)$ be the infinite sum of the reciprocals $1/m$ of positive
integers $m$ having no digits in base $b$ from the excluded set $E\subset
\sD$.  The cases with the admissible set $A$ empty or equal to $\{0\}$ are a
bit special as then $E$ contains all positive digits, so $K(b,E)$ is an empty
sum and evaluates to zero.  But the methods and formulas work, too, as we
shall see later.

It is at the level of introductory courses in the theory of series that such
infinite (or empty...) subsums of the harmonic series converge.  We refer to
\cite{kempner} and \cite{irwin} for the original studies, by now more than one
hundred years old, by Kempner and Irwin (see also
\cite[Thm. 144]{hardywright}).  With $N=\#A$ the cardinality of allowed
digits, one needs (if $N>1$) roughly to keep $N$ times as many terms as before
to reduce the distance to the limit by only a factor $b/N$.  This makes for
very slow convergence and the matter of numerical computation has attracted
widespread attention (see the works of Baillie \cite{baillie1979,baillie2008}
and Schmelzer-Baillie \cite{schmelzerbaillie} and comments therein).

The contents of this paper are organized as follows: in the first section we
quickly review measure-theoretic tools we have introduced in our previous
works \cite{burnolkempner} and \cite{burnolirwin}, where we obtained, starting
from some ``log-like'' formula
\begin{equation}\label{eq:K}
  K(b,E) = \int_{[b^{-1},1)}\frac{\mu(\dx)}{x}\;,
\end{equation}
multiple explicit representations of $K(b,E)$ as series with geometric
convergence.  These series coefficients involve the moments $u_m=\mu(x^m)$ or
complementary moments $v_m=\mu((1-x)^m)$ which are obtainable by linear
recurrences, hence the exact theoretical formulas convert into efficient
numerical algorithms (or rather, efficient enough for obtaining dozens, or hundreds or
thousands of decimal digits in reasonable time for reasonable tasks...).  The
focus for deeper understanding is on the properties of $u_m$ and $v_m$ as
analytic (rational) functions of the base $b$ (and excluded digits from $E$),
but we shall use only little of that in the present article.  The measure
$\mu$ on $[0,1)$, which sometimes will be denoted $\mu_{E}$ or even
$\mu_{b,E}$ to stress its dependency on $b$ and $E$, is an enumerable
combination of weighted Dirac masses chosen for \eqref{eq:K} to hold.

The second section obtains our main result Theorem \ref{thm:main}: an
integral formula for $K(b,E) - \frac{b}{\#E} \log(b)$ in terms of the digamma
function $\psi(x) = \frac{\mathrm{d}}{\dx}\log\Gamma(x)$ and the measure
$\mu$.  We explore first some immediate consequences regarding bounds for
$K(b,E)$.  For example we shall prove the strict upper bound $K(b,E) <
\frac {b}{\#E} \log(b)$ with the sole exception of
$E=\{0\}$, for which $b\log(b) < K(b,\{0\}) < b\log(b) + \zeta(2)b^{-1}$.  We will
also prove that among sets $E$ of cardinality $p<b$ the unique one providing
the maximal value of $K(b,E)$ is $\{0,b-1, \dots, b-p+1\}$.  Most of
this proof will be via an elementary combinatorial argument using only the
original definition of $K(b,E)$ but we will need the information of Theorem
\ref{thm:main} to handle the last step which is to prove that
$K(b,\{b-1,\dots,b-p\})$ is less than $K(b,\{0,b-1, \dots, b-p+1\})$ (except
of course when $p=b$).

In the third section we use the Taylor series of the digamma function at $1$
to transform the integral formula of Theorem \ref{thm:main} into a series (see
Theorem \ref{thm:series}) whose coefficients are given in terms of the
$\zeta(n)$'s, $n\geq2$, and suitable combinations of moments of the measure
$\mu_{b,E}$.  We then focus on the two cases where that formula is the most
efficient for theoretical studies: $E=\{0\}$ and $E=\{b-1\}$, and prove for
those two the existence of an asymptotic expansion of $K(b,E)- b\log(b)$ in
descending powers of $b$ to all orders.  These expansions start as:
\begin{align}
  \label{eq:2}
  K(b,\{0\}) &= b\log(b)+\frac{\zeta(2)}{2b}-\frac{\zeta(3)}{3b^2} -
  \frac{2\zeta(2)-\zeta(4)}{4b^3} + O(b^{-4})
\\
  \label{eq:3}
  K(b,\{b-1\}) &= b\log(b) - \frac{\zeta(2)}{2b} -
  \frac{3\zeta(2)+\zeta(3)}{3b^2} - \frac{2\zeta(2)+4\zeta(3)+\zeta(4)}{4b^3}
  + O(b^{-4})
\end{align}
By a completely different method we had established \eqref{eq:3} earlier in
\cite{burnollargeb}.  Here we justify the explicit values in \eqref{eq:2} and
\eqref{eq:3} only for the first two terms.  The $b^{-3}$ term requires
continuing our efforts but this would take up too much room and would make
sense only in the framework of a more advanced study of the measure $\mu$.
This is what we have done in a sequel \cite{burnolasymptotic}, where the
reader will find explicit formulas for the first five terms, and also the
analogous, but more complex, case with $E=\{d\}$ for a fixed excluded digit
$d>0$:
\begin{equation}\label{eq:1}
  K(b,\{d\}) = b\log(b) - b\log(1+\frac1d) 
             + \frac{(d+\frac12)(\zeta(2)-d^{-2})}{b} + O(b^{-2})
\end{equation}
In \cite{burnolasymptotic} we give explicitly all terms up to $b^{-5}$
inclusive, but they are too voluminous to reproduce here.  In the present
article we do not even prove $K(b,\{d\}) = b\log(b)-b\log(1+d^{-1})+O(b^{-1})$
because it (seemingly) needs additional ideas which would lengthen our
account and which are explained in \cite{burnolasymptotic}.

In the fourth section we explain how to set up linear recurrences for the
coefficients multiplying zeta values in the series from Theorem
\ref{thm:series}.  It turns out that besides the linear recurrences in the
style of \cite{burnolkempner,burnolirwin}, there is another set of linear
recurrences for the same quantities.

In the brief final section we consider the original Kempner series
$K(10,\{9\})$ which had been studied by Fischer \cite{fischer} via a linear
functional on continuous functions, and where the digamma function also was
used, leading to a series representation of $K(10,\{9\})$ involving
$10\log(10$, the zeta values $\zeta(n)$, and certain coefficients $\beta_n$.
It turns out that the Fischer $\beta_n$'s are but the complementary moments
$\mu((1-x)^n)$ of the measure $\mu$, in this speficic case of $b=10$ and
$E=\{9\}$, so the series of Theorem \ref{thm:series} in that case is exactly
the Fischer series.  This is not so much a surprise because of course our main
motivation which triggered the present research was indeed to understand the
Fischer result.  Other series can be derived from our main Theorem
\ref{thm:main} and extra steps are needed to reach results such as the
asymptotic expansion \eqref{eq:1}.  We invite the reader wishing to pursue the
matter further to read our follow-up extensive investigation
\cite{burnolasymptotic}.

\section{Measure and moments}

We use the vocabulary and constructions of \cite{burnolkempner,burnolirwin}.
The core tools there are a measure $\mu_{b,E}$ on $[0,1)$, which we will
simply write here as $\mu$ and whose precise definition is given next, and its
associated Stieltjes (up to replacement $n\to-n$) transforms:
\begin{equation}
  \label{eq:Un}
    U(n) = \int_{[0,1)}\frac{\mu(\dx)}{n+x}
\end{equation}
The measure $\mu$ is constructed in such a manner that $U(n)$ for positive
integers $n$ admits the following alternative expression:
\begin{equation}\label{eq:Unseries}
  U(n) = \sum_{\substack{m\text{ has }n\text{ as leading part}\\
                         \text{and all added digits are admissible}}} \frac1m
\end{equation}
To achieve this each ``admissible string'' $X$, i.e.\@ element of the subset
$\cX = \cup_{l\geq0}A^l$ of $\bsD = \cup_{l\geq0} \sD^l$, contributes $b
^{-|X|}\delta_{n(X)/b^{|X|}}$ where $|X|$ is the length of $X$ and $n(X)$ is
the integer obtained from $X=(d_l,\dots,d_1)$ as $d_l b^{l-1} +\dots + d_1$.
For the empty string, $n(X)$ is defined to be the integer $0$.  If $0\in A$
then many strings map to the same $b$-imal number $x(X)=n(X)/b^{|X|}$ as we
may always add trailing zeros, i.e.\@ make $X$ longer by shifting it to the
left, filling up with zeros on the right: this does not modify $x(X)\in[0,1)$.
The measure $\mu$ is first defined on $\bsD$ and then
pushed to $[0,1)$ via $X\mapsto x(X)$, the same notation is kept.

In the special case where $A=\emptyset$, all the mass comes from the
none-string (i.e.\@ the empty string) and $\mu=\delta_0$.  If $A=\{0\}$ then
$\mu=(1-1/b)^{-1}\delta_0$.

According to \cite[Section 3]{burnolkempner}, the measure $\mu$ has a finite
mass which is equal to $b/\#E$.  As is explained in \cite[Section
4]{burnolirwin} to which we refer for the elementary details, as the total
mass of the discrete measure $\mu$ is finite, we can use it to integrate
arbitrary bounded functions on the interval $[0,1)$, i.e.\@ they don't have to
be continuous.  And according to \cite[Lemma 7]{burnolkempner} any bounded (or
non-negative) function verifies the following integral identity:
\begin{equation}\label{eq:intg}
  \int_{[0,1)}g(x)\mu(\dx) = g(0) + \int_{[0,1)}\frac1b \sum_{a\in A}g(\frac{a+x}b)\mu(\dx)
\end{equation}
The integral equation \eqref{eq:K} representing $K(b,E)$ is immediately
checked from the given definition of $\mu$. And applying the above
\eqref{eq:intg} to \eqref{eq:K} expresses $K(b,E)$ as the sum of the $U(a)$'s
for the admissible non-zero digits $a\in A$ (as is evident a priori from
\eqref{eq:Unseries}).  Applying the integration lemma \eqref{eq:intg} to
$U(a)$ itself produces recursive identities which allow to replace the digits
$a\in A\setminus\{0\}$ with integers having at least two, or at least three,
or more, digits in base $b$.  This is the basis of the approach in
\cite{burnolkempner,burnolirwin} which leads to geometrically convergent
series whose coefficients involve the moments $u_m = \mu(x^m)$, or the
complementary moments $v_m=\mu((1-x)^m)$.  These moments are rational in the
base $b$ (and are also dependent on the excluded set $E$, of course).  They
obey linear recurrences of which we will give a few examples later.

\section{Digamma function and bounds for Kempner sums}

We combine the log-like formula \eqref{eq:K} with the integration
lemma \eqref{eq:intg} in another manner, using the digamma function $\psi(x)
=\frac{d}{dx}\log\Gamma(x)$.  We insert into \eqref{eq:K} its functional
equation \cite[1.7.1 (8)]{erdelyiI}:
\begin{equation}\label{eq:psi}
  \frac1x = \psi(x+1) - \psi(x)
\end{equation}
and then apply the integration lemma to the second part, because the function
$-\psi(x)$ still has the $1/x$ singularity at the origin and we want to get
rid of this to achieve a representation of $K(b,E)$ as an integral on the
full half-open interval $[0,1)$.  In applying the integration lemma we observe
that the sum over the digits corresponds to a decomposition of the original
integration range in successive intervals of length $b^{-1}$, but only those
intervals for which first fractional digit is admissible contribute.  When the
original function as here $\psi(x)\mathbf{1}_{[b^{-1},1)}(x)$ vanishes for
$x<b^{-1}$ there will be no contribution from the digit $a=0$ even if it is
admissible.

So $K(b,E) = I - J$ say, where $I=\int_{[b^{-1},1)}\psi(x+1)\mu(\dx)$ will be
treated later and
\begin{align*}
  J &= \int_{[b^{-1},1)} \psi(x)\mu(\dx)\\
&=\int_{[0,1)}\frac1b \sum_{a\in A,a>0} \psi(\frac{a+x}b)\mu(\dx)\\
&=\int_{[0,1)}\Bigl(\frac1b\sum_{1\leq a <b} \psi(\frac{a+x}b) -
\frac1b\sum_{a\in E\setminus\{0\}} \psi(\frac{a+x}b)\Bigr)\mu(\dx)
\end{align*}
Now enters the $b$-plication formula for the digamma
function \cite[1.7.1. (12)]{erdelyiI}:
\begin{equation*}
  \frac1b\sum_{0\leq a <b} \psi(\frac{a+x}b) = \psi(x) - \log(b)
\end{equation*}
We need a variant of the above where the singularity at $x=0$ is removed, and this is:
\begin{equation}\label{eq:psiadd}
  \frac1b\sum_{1\leq a \leq b} \psi(\frac{a+x}b) = \psi(x+1) - \log(b)
\end{equation}
Indeed it follows from the functional equation \eqref{eq:psi} that
\begin{equation*}
    \psi(x+1) -\frac1b\psi(\frac xb+1) = \psi(x)-\frac1b\psi(\frac xb)
\end{equation*}
With \eqref{eq:psiadd} we now get
\begin{equation}\label{eq:J}
  J =  -\log(b)\underbrace{\mu([0,1))}_{=b/\#E} + \int_{[0,1)} \psi(x+1)\mu(\dx)
  \begin{aligned}[t]
   & - \frac1b\int_{[0,1)}\psi(\frac xb+1)\mu(\dx)\\
   & - \int_{[0,1)}\frac1b\sum_{a\in E\setminus\{0\}} \psi(\frac{a+x}b)\mu(\dx)
  \end{aligned}
\end{equation}
We had left aside
\begin{equation*}
  I = \int_{[b^{-1},1)} \psi(x+1)\mu(\dx)
  = \int_{[0,1)} \psi(x+1)\mu(\dx)-\int_{[0,b^{-1})} \psi(x+1)\mu(\dx)
\end{equation*}
We handle the last integral via the integration lemma \eqref{eq:intg} for the
function $g(x)= \psi(x+1)$ for $0\leq x < b^{-1}$ and zero elsewhere.  We must
be careful to distinguish whether $0\in A$ or not:
\begin{equation*}
  \int_{[0,b^{-1})} \psi(x+1)\mu(\dx)=\psi(1)+
  \begin{cases}
    0&(0\notin A)\\
    \frac1b\int_{[0,1)}\psi(\frac xb + 1)\mu(\dx)&(0\in A)
  \end{cases}
\end{equation*}
So we obtain
\begin{equation}
  \label{eq:I}
  I = \int_{[0,1)} \psi(x+1)\mu(\dx)-\psi(1) 
     - \delta_{0\in A}\frac1b\int_{[0,1)} \psi(\frac xb +1)\mu(\dx)
\end{equation}
Combining \eqref{eq:I} and \eqref{eq:J} gives:
\begin{theorem}\label{thm:main}
  Let $E_1=E$ if $0\notin E$ and $E_1=(E\cup\{b\})\setminus\{0\}$ if
  $0\in E$.  The Kempner sum for base
  $b$ and excluded set of digits $E$ has value:
  \begin{equation*}
    K = \frac{b}{\#E}\log(b) 
        + \frac1b\int_{[0,1)}\sum_{a\in E_1} \Bigl(\psi(\frac{a+x}b)-\psi(1)\Bigr)\mu_E(\dx)
  \end{equation*}
  where $\psi$ is the digamma function.  We have indexed $\mu$ by $E$ to
  stress that it depends upon it.
\end{theorem}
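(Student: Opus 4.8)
The plan is to substitute into the defining relation $K(b,E) = I - J$ the two formulas \eqref{eq:I} and \eqref{eq:J} just obtained for $I$ and $J$, and simplify. The first observation is that the integral $\int_{[0,1)}\psi(x+1)\mu(\dx)$ enters $I$ with coefficient $+1$ and enters $J$ with coefficient $+1$, so it cancels in $I-J$; meanwhile the term $-\log(b)\,\mu([0,1)) = -\frac{b}{\#E}\log(b)$ of \eqref{eq:J} contributes $+\frac{b}{\#E}\log(b)$ to $I-J$, which is the main term of the asserted formula.

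Next I would collect the two occurrences of $\int_{[0,1)}\psi(\frac xb+1)\mu(\dx)$: from \eqref{eq:I} it comes with coefficient $-\frac1b\delta_{0\in A}$, and from $-J$ with coefficient $+\frac1b$, so its total coefficient is $\frac1b(1-\delta_{0\in A}) = \frac1b\delta_{0\in E}$, since $0$ lies in exactly one of $A$, $E$. Rewriting $\psi(\frac xb+1) = \psi(\frac{b+x}b)$ identifies this as the contribution of the excluded ``digit'' $a=b$. Hence, combining with the surviving sum $\frac1b\int_{[0,1)}\sum_{a\in E\setminus\{0\}}\psi(\frac{a+x}b)\mu(\dx)$ coming from $-J$, the digit sum that remains runs over $E\setminus\{0\}$ augmented by $\{b\}$ exactly when $0\in E$; this is precisely $\sum_{a\in E_1}\psi(\frac{a+x}b)$, with $\#E_1 = \#E$ in either case. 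At this stage one has
\[
  K = \frac{b}{\#E}\log(b) - \psi(1) + \frac1b\int_{[0,1)}\sum_{a\in E_1}\psi\bigl(\tfrac{a+x}b\bigr)\mu(\dx).
\]

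It then remains to absorb $-\psi(1)$ into the integral: since $\mu$ has total mass $b/\#E$ and $\#E_1=\#E$, we may write $\psi(1) = \frac1b\,\#E_1\,\psi(1)\,\mu([0,1)) = \frac1b\int_{[0,1)}\sum_{a\in E_1}\psi(1)\,\mu(\dx)$, and subtracting turns the integrand into $\sum_{a\in E_1}\bigl(\psi(\frac{a+x}b)-\psi(1)\bigr)$, which is the statement. The step deserving the most attention is the bookkeeping of the Kronecker deltas tracking whether $0\in A$ or $0\in E$, together with recognizing $\psi(\frac xb+1)$ as the $a=b$ term; but there is no substantive obstacle — the real work was already done in deriving \eqref{eq:I} and \eqref{eq:J} — and since the argument uses only \eqref{eq:K}, the integration lemma \eqref{eq:intg} and the mass identity $\mu([0,1))=b/\#E$, it covers the degenerate cases $A=\emptyset$ and $A=\{0\}$ as well.
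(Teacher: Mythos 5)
Your proposal is correct and follows the same route as the paper: subtract \eqref{eq:J} from \eqref{eq:I}, note that the uncompensated $\frac1b\int\psi(\frac xb+1)\mu_E(\dx)$ term (present exactly when $0\in E$) is the $a=b$ contribution since $\psi(\frac xb+1)=\psi(\frac{b+x}b)$, and absorb $\psi(1)$ into the integral using $\#E_1=\#E$ and $\mu_E([0,1))=b/\#E$. Your explicit Kronecker-delta bookkeeping just spells out in more detail what the paper's short proof states.
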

\begin{proof}
  If $0\in E$ then it does not belong to $A$ and the term
  $\frac1b\int_{[0,1)} \psi(\frac xb +1)\mu_E(\dx)$ from \eqref{eq:J} has no
  compensation in equation \eqref{eq:I}. It gives an additional contribution
  with integrand $\psi((a+x)/b)$ using $a=b$ in the above equation.
  We can then move $\psi(1)$ inside the integral:  in all cases there
  are $\#E$ contributions and $\mu_E([0,1))=b/\#E$.
\end{proof}
As the $\psi$ function is strictly increasing on $(0,\infty)$ we can state
some immediate corollaries of Theorem \ref{thm:main}.  Recall that $\mu_E([0,1))
= b/\#E$, so each $a\in E_1$ contributes a quantity $s_{b,E}(a)$ which verifies
\begin{equation}\label{eq:boundsperdigit}
  \frac{\psi(\frac ab)- \psi(1)}{\#E} \leq s_{b,E}(a) <  
  \frac{\psi(\frac {a+1}b)- \psi(1)}{\#E}
\end{equation}
The lower bound will be an equality if and only if $\mu_E$ is concentrated on
$\{0\}$ and then the lower bound is an equality for all $a\in E_1$.  This
happens only if $E$ contains all positive digits, in which case $K(b,E)$ is an
empty sum and has value zero.  Let's check all is fine.  If $E=\sD$, then
$\mu_E = \delta_0$ where $\delta_0$ is the Dirac point mass at the origin: the
sole admissible string is the empty string.  We need to check
\begin{equation*}
  0 = \log(b) + b^{-1}\sum_{1\leq a\leq b} \bigl(\psi(\frac ab)- \psi(1)\bigr)
\end{equation*}
which is indeed true from \eqref{eq:psiadd} applied with $x=0$. If
now $E=\sD\setminus\{0\}$ then $\mu_E=(1 - b^{-1})^{-1}\delta_0$ (from all
strings containing only zero digits) and we need to check
\begin{equation*}
  0 = \frac{b \log(b)}{b-1} 
     + (b-1)^{-1}\sum_{1\leq a<b} \bigl(\psi(\frac ab)- \psi(1)\bigr)
\end{equation*}
and again it works.

From \eqref{eq:boundsperdigit} we have $s_{b,E}(a)<0$ except if $a=b$ so a
trivial corollary is that $K(b,E)<(\#E)^{-1}b\log(b)$ if $0$ is not an
excluded digit.  Somewhat more is true:
\begin{proposition}\label{prop:blogbsurm}
  One has $K(b,E)< \frac{b}{\#E}\log(b)$ in all cases except for the sole
  exception which is $E=\{0\}$, then $K(b,\{0\})>b\log(b)$.
\end{proposition}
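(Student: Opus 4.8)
The plan is to split into three regimes. When $0\notin E$ the inequality is the ``trivial corollary'' already recorded: $E_1=E$, every $a\in E_1$ is at most $b-1$, so $\psi(\tfrac{a+1}b)\le\psi(1)$ and \eqref{eq:boundsperdigit} forces $s_{b,E}(a)<0$ for each $a$, whence summing over $E_1$ gives $K<\tfrac b{\#E}\log b$. When $E=\{0\}$ one has $E_1=\{b\}$, so Theorem \ref{thm:main} specializes to $K(b,\{0\})=b\log b+\tfrac1b\int_{[0,1)}\bigl(\psi(1+\tfrac xb)-\psi(1)\bigr)\mu(\dx)$; the integrand is nonnegative and strictly positive for $x>0$, and since $1\in A$ the admissible string $(1)$ puts positive mass at $x=1/b$, so the integral is $>0$ and $K(b,\{0\})>b\log b$. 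Thus the real work lies in the remaining regime $0\in E$, $E\ne\{0\}$, where I must show that the contribution of the index $a=b$ in Theorem \ref{thm:main} is outweighed by those of the positive digits of $E$.

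Write $p=\#E\ge2$ and $S=\sum_{a\in E}a$ (note $S\ge1$), and recall $E_1=(E\setminus\{0\})\cup\{b\}$, so $\Sigma:=\sum_{a\in E_1}a=S+b$. By Theorem \ref{thm:main} it suffices to prove $T<0$, where $T=\int_{[0,1)}\sum_{a\in E_1}\bigl(\psi(\tfrac{a+x}b)-\psi(1)\bigr)\mu(\dx)$. My idea is to linearize this using the concavity of $\psi$ on $(0,\infty)$. All the points $\tfrac{a+x}b$ (for $a\in E_1$, $x\in[0,1)$) lie in $(0,1+\tfrac1b)$, so for each fixed $x$ Jensen's inequality gives $\sum_{a\in E_1}\psi(\tfrac{a+x}b)\le p\,\psi\!\bigl(\tfrac{\Sigma+px}{pb}\bigr)$, and the tangent-line estimate $\psi(y)-\psi(1)\le\psi'(1)(y-1)=\zeta(2)(y-1)$ (again from concavity) then yields $\sum_{a\in E_1}\bigl(\psi(\tfrac{a+x}b)-\psi(1)\bigr)\le\zeta(2)\,\tfrac{\Sigma+px-pb}{b}$. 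Integrating against $\mu$ and using $\mu([0,1))=\tfrac bp$ reduces everything to a single averaged quantity: $T\le\tfrac{\zeta(2)}b\bigl((\Sigma-pb)\tfrac bp+p\,u_1\bigr)$ with $u_1:=\int_{[0,1)}x\,\mu(\dx)$, so it is enough that $p^2u_1<b\bigl((p-1)b-S\bigr)$.

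The closing step is to compute $u_1$ and verify this rational inequality. Feeding $g(x)=x$ into the integration lemma \eqref{eq:intg} gives $u_1\bigl(b^2-\#A\bigr)=\tfrac bp\sum_{a\in A}a$, hence, with $\#A=b-p$ and $\sum_{a\in A}a=\tfrac{b(b-1)}2-S$, $u_1=b\bigl(\tfrac{b(b-1)}2-S\bigr)\big/\bigl(p(b^2-b+p)\bigr)$. The target inequality becomes $p\bigl(\tfrac{b(b-1)}2-S\bigr)<(b^2-b+p)\bigl((p-1)b-S\bigr)$, and a short expansion shows the right side minus the left side equals $b(b-1)\bigl((p-1)b-S\bigr)+\tfrac{pb}2(2p-b-1)$. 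Since the $p-1$ nonzero digits of $E$ sum to at most $(p-1)b-\binom p2$ one has $(p-1)b-S\ge\binom p2>0$; a two-line case distinction on the sign of $2p-b-1$ then shows this expression is strictly positive for every $p\ge2$ (and it is negative for $p=1$, consistently with $E=\{0\}$ being the stated exception). This gives $T<0$ and the proposition.

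I expect the genuine obstacle to be the single family $E=\{0,b-1\}$: for $b\ge4$ the integrand of $T$ is negative near $0$ but becomes positive near the right endpoint $\tfrac{b-2}{b-1}$ of the support of $\mu$, so no pointwise sign argument can succeed --- the positive tail has to be controlled through how little mass $\mu$ assigns there, i.e.\ through a moment. For every other $E\ni0$ a cruder pointwise version (pair the index $b$ with the smallest positive digit of $E$ and apply Jensen to that pair alone) already does the job without any moment; the concavity-plus-tangent linearization above is what replaces all of this by one uniform estimate, reducing the hard case to the trivial check $u_1<\tfrac b4$ (the $p=2$, $S=b-1$ instance of the inequality).
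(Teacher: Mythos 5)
Your proposal is correct and follows essentially the paper's own route: the same trichotomy ($0\notin E$ via \eqref{eq:boundsperdigit}, $E=\{0\}$ by positivity of the lone $a=b$ term, and the remaining case $0\in E$, $\#E=p\ge2$), and in that key case the same mechanism of bounding $\psi(y)-\psi(1)$ by the tangent line $\zeta(2)(y-1)$ at $y=1$ (strict concavity) and reducing to an inequality for the first moment $u_1$, verified from its explicit rational expression coming from \eqref{eq:intg}. The only, harmless, deviation is that you linearize the entire sum over $E_1$, which costs the extra observation $\sum_{a\in E}a\le(p-1)b-\binom{p}{2}$ but absorbs $E=\sD$ uniformly, whereas the paper linearizes only the pair formed by the $a=b$ term and a single positive excluded digit (the other terms being negative outright), reducing to the simpler check $2u_1<u_0$.
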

\begin{proof}
  We have already seen that for the inequality to be false, $0$ must be among
  the excluded digits.  And from the previous discussion if $E=\{0\}$ then
  $K(b,E)=b\log(b) + s_{b,E}(0)>b\log(b)$.  Assume now that $E$ contains $0$
  and some positive digit $a$, and is not all of $\sD$ (as $K(b,\sD)=0$ the
  case $E=\sD$ is ok).  So $A=\sD\setminus E$ contains some positive digit or
  digits, and we can refer to theorems of \cite{burnolkempner} as this
  reference assumes $A$ contains at least one positive digit.  We will show
  that $\psi(\frac{a}b+\frac xb)+\psi(1+\frac xb)-2\psi(1)$ gives a negative
  result when integrated against $\mu_E$.  We use the strict concavity to
  obtain $\psi(\frac{a+x}b)+\psi(1+\frac
  xb)-2\psi(1)<\zeta(2)(\frac{a+x}{b}-1)+\zeta(2)\frac xb$ for $0\leq x \leq
  1$ and now inject $a\leq b-1$ so the integrand is bounded above (strictly)
  pointwise by $\zeta(2)b^{-1}(2x-1)$.  So the question is now whether
  $2u_1<u_0$ with $u_m=\mu_E(x^m)$.  In \cite[Prop. 8]{burnolkempner} the
  formulas $u_0 = \frac{b}{b-N}$ with $N=\#A=b-\#E\leq b-2$ and $u_1
  =(b^2-N)^{-1}u_0\sum_{d\in A} d$ are given and we thus have to check if
  $2\sum_{d\in A} d < b^2 - N$.  But $2\sum_{d\in A} d< b(b-1)=b^2-b$ so this
  is true with some margin.
\end{proof}
\begin{proposition}\label{prop:monotonie}
  Let $E$ be of cardinality $p<b$.  Then $K(b,E)\leq
  K(b,\{0,b-1,\dots,b-p+1\})$ with equality only for $E=\{0,b-1,\dots,b-p+1\}$.
\end{proposition}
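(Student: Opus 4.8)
The plan is to combine an elementary exchange argument, resting only on the series definition of $K(b,E)$, with the per‑digit bounds \eqref{eq:boundsperdigit} supplied by Theorem~\ref{thm:main}. The single combinatorial input I would isolate first is a \emph{swapping lemma}: if $0<d_1<d_2$ are digits not lying in a set $F\subseteq\sD$, then $K(b,F\cup\{d_1\})<K(b,F\cup\{d_2\})$. To see this, let $\phi$ be the involution of $\{0,1,2,\dots\}$ which, in base $b$, interchanges every occurrence of the digit $d_1$ with the digit $d_2$. An integer avoiding $F\cup\{d_2\}$ has no digit $d_2$, so $\phi$ merely turns its $d_1$'s into $d_2$'s; as $d_1>0$ and $d_2>0$ this creates no leading zero and does not change the number of digits, and the result avoids $F\cup\{d_1\}$. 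Thus $\phi$ is a bijection between the integers avoiding $F\cup\{d_2\}$ and those avoiding $F\cup\{d_1\}$, and $\phi(m)\ge m$ with equality exactly when $m$ contains no digit $d_1$. Rearranging the (convergent, positive‑term) series, $K(b,F\cup\{d_2\})=\sum_m 1/m>\sum_m 1/\phi(m)=K(b,F\cup\{d_1\})$, the strict inequality coming from the term $m=d_1$, for which $\phi(d_1)=d_2>d_1$.

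Next I would reduce to a single case. For $E$ with $0\notin E$ and $\#E=p$: as long as $E\neq\{b-p,\dots,b-1\}$ one can pick $d\in E$ with $d<b-p$ and $d'\in\{b-p,\dots,b-1\}\setminus E$ and replace $E$ by $(E\setminus\{d\})\cup\{d'\}$, which strictly increases $K$ and strictly increases $\sum_{e\in E}e$ while preserving $\#E=p$ and $0\notin E$; iterating terminates at $\{b-p,\dots,b-1\}$, so $K(b,E)\le K(b,\{b-1,\dots,b-p\})$ with equality only for that set. For $E$ with $0\in E$, write $E=\{0\}\cup E'$ with $E'\subseteq\{1,\dots,b-1\}$, $\#E'=p-1$, and run the same process inside $E'$ (the lemma only moves positive digits, so the $0$ is untouched); this gives $K(b,E)\le K(b,\{0,b-1,\dots,b-p+1\})$ with equality only for $E=\{0,b-1,\dots,b-p+1\}$. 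Since every $E$ of cardinality $p$ falls into one of these two classes, the proposition (with its equality clause) follows once we prove the lone inequality $K(b,\{b-1,\dots,b-p\})<K(b,\{0,b-1,\dots,b-p+1\})$.

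Here the swapping trick is useless, since $0$ cannot be a leading digit, and I would instead use \eqref{eq:boundsperdigit}. Write $E^{(1)}=\{b-p,\dots,b-1\}$ and $E^{(2)}=\{0,b-1,\dots,b-p+1\}$; both have cardinality $p$, so the main term $\frac{b}{p}\log(b)$ is common to $K(b,E^{(1)})$ and $K(b,E^{(2)})$, while the sets $E_1$ of Theorem~\ref{thm:main} attached to them are $E^{(1)}$ itself and $\{b-p+1,\dots,b-1,b\}$, respectively. Applying the strict upper bound of \eqref{eq:boundsperdigit} to each digit of $E^{(1)}$ and reindexing $a\mapsto a+1$, the digit $b-p$ contributes the boundary value $\psi(b/b)-\psi(1)=0$, so $K(b,E^{(1)})<\frac{b}{p}\log(b)+\frac1p\sum_{a=b-p+1}^{b-1}(\psi(a/b)-\psi(1))$. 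Applying the lower bound of \eqref{eq:boundsperdigit} to each digit of $\{b-p+1,\dots,b-1,b\}$, the digit $b$ contributes $\frac1p(\psi(b/b)-\psi(1))=0$, so $K(b,E^{(2)})\ge\frac{b}{p}\log(b)+\frac1p\sum_{a=b-p+1}^{b-1}(\psi(a/b)-\psi(1))$. The two right‑hand sides agree, whence $K(b,E^{(1)})<K(b,E^{(2)})$, finishing the proof.

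The swapping lemma and the bookkeeping of the exchange process I expect to be routine; the genuine obstacle is the final inequality, and the agreeable feature is that, with Theorem~\ref{thm:main} in hand, it follows from the crudest one‑sided estimates \eqref{eq:boundsperdigit} alone, because the digit $b$ that enters the $E_1$ of $E^{(2)}$ and the digit $b-p$ that leaves $E^{(1)}$ both contribute exactly $\psi(1)$, collapsing the upper and lower bounds to one and the same value. The only thing to watch is the degenerate range $p=b-1$, where $K(b,E^{(1)})=0$; since \eqref{eq:boundsperdigit} was established in every case, nothing needs to change.
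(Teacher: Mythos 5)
Your proposal is correct and follows essentially the same route as the paper: an elementary digit-exchange bijection (yours swaps an arbitrary pair $d_1<d_2$, the paper moves an excluded digit $a$ to $a+1$, a cosmetic difference) reduces everything to comparing $U=\{b-p,\dots,b-1\}$ with $T=\{0,b-1,\dots,b-p+1\}$, and that last comparison is settled exactly as in the paper, by noting that the strict upper bound of \eqref{eq:boundsperdigit} for $U$ and the lower bound for $T$ (with the digit $b$ in $E_1$ contributing $\psi(1)-\psi(1)=0$) collapse to the same quantity.
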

\begin{proof}
  Consider an excluded \emph{positive} digit $a\in E$ such that $a<b-1$ and
  $a+1\notin E$, if there is one.  Let $E'=(E\setminus\{a\})\cup\{a+1\}$.  We
  claim that $K(b,E)<K(b,E')$.  For this let $\cA$ be the set of positive
  integers with no $b$-ary digits in $E$ and $\cA'$ be the analogous set of
  positive integers with no $b$-ary digits in $E'$.  Let $n\in \cA$ and
  consider the digits $d_\ell,\dots, d_1$ of its minimal representation
  (i.e. $d_\ell>0$), $n=d_\ell b^{\ell-1}+\dots + d_1 b^0$. None of the digits
  is equal to $a$, but maybe some are equal to $a+1$.  Define $f(n)$ to be
  that integer obtained from replacing all digits equal to $a+1$ by $a$.  As
  $a>0$ notice that the minimal representation of $f(n)$ via $b$-digits is the
  one we just described: the leading digit did not become zero.  Certainly
  $f(n)\in\cA'$.  And certainly $n\geq f(n)$ with equality only if no digit
  was equal to $a+1$.  Now, conversely, suppose a positive integer $m$ with
  $\ell$ digits is given in $\cA'$.  Define $g(m)$ via the replacement of all
  digits equal to $a$ by $a+1$.  As $a<b-1$, this replacement gives us a legit
  $b$-ary representation (i.e. without carry operations).  We have $g(f(n))=n$
  and $f(g(m))=m$.  So $f$ establishes a bijection.  The Kempner series (they
  are not empty here...) have positive terms, so any rearrangement gives the
  same sum.  As for example $f(a+1)=a<(a+1)$ and
  generally $f(n)\leq n$, we deduce that $K(b,E)<K(b,E')$.

  Proceeding iteratively (moving the largest digit of $E$ to $b-1$, then the
  next largest to $b-2$, etc\dots) we obtain $K(b,E)\leq K(b,E'')$ where
  either $E''= T = \{0,b-1,\dots,b-p+1\}$ or $E''=U = \{b-1,\dots,b-p\}$
  depending on whether $0\in E$ or not, and with equality only if $E=E''$.  We
  now prove $K(b,U)<K(b,T)$ using Theorem \ref{thm:main}.  We have to be
  careful of course that the measures are not the same.  But we can use the
  bounds \eqref{eq:boundsperdigit}.  On one hand we obtain
  \begin{equation*}
    K(b,T) \geq \frac{b}p \log(b) + 
      \frac 1p \sum_{a=b-p+1}^{a=b} \bigl(\psi(\frac{a}{b})- \psi(1)\bigr)
  \end{equation*}
  and on the other hand
  \begin{equation*}
    K(b,U) < \frac{b}p \log(b) + 
      \frac 1p \sum_{a=b-p}^{a=b-1} \bigl(\psi(\frac{a+1}{b})- \psi(1)\bigr)
  \end{equation*}
  The conclusion follows.
\end{proof}
\begin{remark}
  From the \textsf{Maple\texttrademark} code provided with
  \cite{burnolkempner} we know that $K(10,\{0,9\})$ is approximately
  $\np{11.490785103824471}\approx \np{0.499}\times10\log(10)$.  And we can
  confrim that this is the maximal value for $b=10$ and two excluded digits.
  The maximal value for $b=10$ and three excluded digits is found to be
  $K(10,\{0,8,9\})\approx\np{7.543171528424965}\approx\np{0.3276}\times10\log(10)$.
  The maximal value for $b=10$ and four excluded digits is
  $K(10,\{0,7,8,9\})\approx\np{5.501015712594091}\approx\np{0.2389}\times
  10\log(10)$.  These numerical results%
  \footnote{Of course they were not obtained from Theorem \ref{thm:main} but
    using the algorithm and code from \cite{burnolkempner}; readers who have access to
    \textsf{Mathematica\texttrademark} can use the code of Robert Baillie at
    \cite{baillie2008}, the pdf file gives on page 16 the example
    $K(10,\{8,9\})\approx\np{11.2915816168}$ which is also what the algorithm
    of \cite{burnolkempner} produces; in \cite{schmelzerbaillie} the example
    is given of $K(10,\{0,2,4,6,8\})$ with 100 fractional digits and it
    matches exactly what the \textsf{Maple\texttrademark} code of
    \cite{burnolkempner}, which is based on a completely different algorithm,
    produces in that case.  And the same perfect match is observed with the
    value of $K(10,\{1,3,5,7,9\})$ with 100 fractional decimal digits.}
  appear to match well the predictions from Propositions \ref{prop:blogbsurm}
  and \ref{prop:monotonie}.%
%

We went through extensive numerical verifications with larger bases (as
admittedly $b=10$ is not close to infinity...).  We used for this
the \cite{burnolkempner} algorithm, which originated in the same general framework
of the measures and their moments, which has led us to Theorem \ref{thm:main},
but readers are invited to check with the code of \cite{baillie2008}.
\end{remark}
\enlargethispage{\baselineskip}
We now turn to some consequences of equation \eqref{eq:boundsperdigit} when
$E$ is a singleton.
\begin{proposition}\label{prop:Kbdbounds}
  There holds, for $d=0$:
  \begin{equation*}
    b\log(b)<K(b,\{0\})<b\log(b)+\psi(1+b^{-1})-\psi(1)<b\log(b)+\zeta(2) b^{-1}
  \end{equation*}
  and, for $b>2$ and $0<d<b$:
  \begin{equation*}
    b\log(b) + \psi(\frac db)-\psi(1) < 
    K(b,\{d\}) <
    b\log(b) + \psi(\frac {d+1}b)-\psi(1)
  \end{equation*}
  Hence $K(b,\{d\})$ is strictly increasing for positive increasing $d$, and
  bounded above strictly by $b\log(b)$ for positive $d$'s.
\end{proposition}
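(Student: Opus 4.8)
The plan is to read off both displayed statements directly from Theorem~\ref{thm:main} and the per-digit bounds \eqref{eq:boundsperdigit}, specialized to $\#E=1$, so that $\frac{b}{\#E}\log(b)=b\log(b)$. Everything hinges on two facts already recorded in the excerpt: $\psi$ is strictly increasing (and strictly concave) on $(0,\infty)$, and the lower inequality in \eqref{eq:boundsperdigit} is an equality only when $\mu_E$ is the point mass at $0$, which occurs precisely when $E$ contains every positive digit.

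For $d=0$: since $0\in E$ we have $E_1=\{b\}$ in Theorem~\ref{thm:main}, so $K(b,\{0\})=b\log(b)+s_{b,\{0\}}(b)$ with $s_{b,\{0\}}(b)=\frac1b\int_{[0,1)}\bigl(\psi(1+\tfrac xb)-\psi(1)\bigr)\mu_{\{0\}}(\dx)$. Plugging $a=b$ into \eqref{eq:boundsperdigit} gives $0\le s_{b,\{0\}}(b)<\psi(1+b^{-1})-\psi(1)$; and since $A=\{1,\dots,b-1\}$ is nonempty, $\mu_{\{0\}}$ is not concentrated at $0$, so the left inequality is strict. This gives the first two of the three claimed inequalities. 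For the third I would invoke the strict concavity of $\psi$ on $(0,\infty)$ (already used in the proof of Proposition~\ref{prop:blogbsurm}) together with the trigamma value $\psi'(1)=\sum_{n\ge1}n^{-2}=\zeta(2)$: these yield $\psi(1+h)-\psi(1)<\zeta(2)h$ for every $h>0$, and $h=b^{-1}$ closes the chain.

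For $b>2$ and $0<d<b$: now $0\notin E$, so $E_1=E=\{d\}$ and Theorem~\ref{thm:main} reads $K(b,\{d\})=b\log(b)+s_{b,\{d\}}(d)$; feeding $a=d$ into \eqref{eq:boundsperdigit} gives $\psi(\tfrac db)-\psi(1)\le s_{b,\{d\}}(d)<\psi(\tfrac{d+1}{b})-\psi(1)$. Since $b>2$, the singleton $\{d\}$ does not exhaust the positive digits, so $\mu_{\{d\}}$ is again not a point mass at $0$ and the lower bound is strict; this is the second displayed assertion. Monotonicity then follows by sandwiching: for $1\le d\le b-2$ the strict upper bound for $K(b,\{d\})$ and the strict lower bound for $K(b,\{d+1\})$ are the very same number $b\log(b)+\psi(\tfrac{d+1}{b})-\psi(1)$, so $K(b,\{d\})<K(b,\{d+1\})$, and iterating gives strict growth over all positive $d$. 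Finally $\tfrac{d+1}{b}\le1$ and the monotonicity of $\psi$ force $\psi(\tfrac{d+1}{b})-\psi(1)\le0$, so the upper estimate already proved yields $K(b,\{d\})<b\log(b)$ for positive $d$ (this last point is also just Proposition~\ref{prop:blogbsurm} applied to $E=\{d\}$).

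I do not anticipate a genuine obstacle: the substance is carried entirely by Theorem~\ref{thm:main} and \eqref{eq:boundsperdigit}. The only points needing care are the strictness of the lower bounds — which is exactly where the hypothesis $b>2$ is used when $d>0$, to prevent $\mu_{\{d\}}$ from degenerating to $\delta_0$ — and the elementary concavity estimate $\psi(1+b^{-1})-\psi(1)<\zeta(2)b^{-1}$, which reduces to the identity $\psi'(1)=\zeta(2)$.
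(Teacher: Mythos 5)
Your proof is correct and follows essentially the same route as the paper: both read the two displayed bounds off Theorem~\ref{thm:main} via \eqref{eq:boundsperdigit} with $\#E=1$ (noting $E_1=\{b\}$ when $E=\{0\}$), and both get strictness of the lower bound from the observation that $\mu_E$ degenerates to a point mass at $0$ only when $E$ contains every positive digit, i.e.\ only for $b=2$, $d=1$. Your extra details — the concavity estimate $\psi(1+b^{-1})-\psi(1)<\psi'(1)b^{-1}=\zeta(2)b^{-1}$ and the sandwich argument for monotonicity — are exactly the steps the paper leaves implicit, and they are carried out correctly.
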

\begin{proof}
  Both statements follow directly from \eqref{eq:boundsperdigit} and from our
  discussion of when the lower bound there is not strict: for $E$ a singleton
  this can happen only for $b=2$ and $d=1$, which gives $K(2,\{1\})=0$ and is
  not among the considered cases in the proposition.
\end{proof}
\begin{remark}
  The bounds for
  $K(b,\{0\})$ are a quantitatively precise version of Fine's $b\log(b)+
  O(b^{-1})$ result from \cite{segalleppfine1970}.

  Note that $-d^{-1}b<\psi(db^{-1})-\psi(1)$ so the estimate
  $b\log(b)+O(b)$ of Kl{\o}ve \cite{klove1971} is confirmed in that case of a single
  excluded digit.

  For $b=2$ and $d=1$ the measure $\mu_E$ is $2\delta_0(x)$ and the formula of
  Theorem \ref{thm:main} gives $K(2,1) = 2\log(2) - \psi(1)+\psi(\frac12)=0$,
  which is the correct value.

  For $b=2$ and $d=0$, $K(2,0)$ is the Erdös-Borwein constant
  $\sum_{n>0}(2^n-1)^{-1}$ of numerical value
  $\approx{\np{1.60669515241529}}$.  The upper bound from Proposition
  \ref{prop:Kbdbounds} is $2\log(2)+\psi(\frac32)-\psi(1) = 2$.
\end{remark}

\section{Zeta values and asymptotic expansions}

\begin{proposition}
  Let $F=\{b-a, a\in E_1\}$, so $F\subset\sD$ ($F$ is the set of residue
  classes modulo $b$ of the $b-a$'s for $a\in E$).  The Kempner sum for base
  $b$ and excluded set of digits $E$ has value:
  \begin{equation*}
    K = \frac{b}{\#E}\log(b)
      - \int_{[0,1)}\frac1b\sum_{d\in F} \bigl(\psi(1)-\psi(1 - \frac{d-x}b)\bigr)\mu_E(\dx)
  \end{equation*}
\end{proposition}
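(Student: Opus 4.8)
The plan is to obtain this proposition directly from Theorem~\ref{thm:main} by a change of summation variable; no new analytic ingredient is required. Recall that Theorem~\ref{thm:main} reads
\[
  K = \frac{b}{\#E}\log(b) + \frac1b\int_{[0,1)}\sum_{a\in E_1} \Bigl(\psi(\tfrac{a+x}b)-\psi(1)\Bigr)\mu_E(\dx).
\]
First I would set $d = b - a$ for each $a\in E_1$. Since $E_1\subset\{1,\dots,b\}$ (it is $E$ with $0$, if present, replaced by $b$), the map $a\mapsto b-a$ is a bijection from $E_1$ onto a subset of $\{0,\dots,b-1\}=\sD$, and this subset is by definition $F$. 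I would pause here to note that the value $a=b$, which occurs precisely when $0\in E$, maps to $d=0$; this is exactly why the statement describes $F$ via residue classes modulo $b$ of the $b-a$ for $a\in E$, and it is the only bookkeeping point that needs a moment's care.

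The key step is then the elementary identity: with $a = b - d$ one has $\frac{a+x}{b} = \frac{b-d+x}{b} = 1 - \frac{d-x}{b}$, so that
\[
  \psi\bigl(\tfrac{a+x}b\bigr) - \psi(1) = \psi\bigl(1-\tfrac{d-x}b\bigr) - \psi(1) = -\bigl(\psi(1) - \psi(1-\tfrac{d-x}b)\bigr).
\]
Substituting this term by term into the sum of Theorem~\ref{thm:main} and replacing the index set $E_1$ by $F$ produces exactly the claimed formula, the overall minus sign emerging from the bracket.

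Finally I would add a one-line check that every digamma value occurring is at a legitimate (positive) argument: for $d\in\{0,\dots,b-1\}$ and $x\in[0,1)$ we have $d-x\in(-1,b-1)$, hence $1-\frac{d-x}{b}\in(\tfrac1b,\,1+\tfrac1b)\subset(0,\infty)$, so the integrand is well defined and bounded and the integral against the finite measure $\mu_E$ makes sense. There is essentially no obstacle here; the proof is purely a reindexing plus the reflection-type algebraic identity above.
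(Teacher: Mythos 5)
Your proposal is correct and is essentially the paper's own proof: the paper likewise just sets $d=b-a$ for $a\in E_1$, rewrites $\frac{a+x}{b}=1-\frac{d-x}{b}$, and reindexes the sum from Theorem \ref{thm:main} over $F$. Your extra remarks on the case $a=b\mapsto d=0$ and on positivity of the digamma arguments are fine but not needed beyond the one-line reindexing.
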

\begin{proof}
  This is only a matter of replacing $a+x$ by $b - (b-a -x)$ and setting
  $d=b-a$ for $a\in E_1$, in the formula of Theorem \ref{thm:main}.
\end{proof}
Using the power series expansion \cite[1.17 (5)]{erdelyiI} 
$
  \psi(1)-\psi(u) = \sum_{m=1}^\infty \zeta(m+1)(1-u)^{m}
$
which is valid for $|u-1|<1$ with normal convergence if $|u-1|\leq 1-\eta$, $0<\eta<1$,
we obtain:
\begin{theorem}\label{thm:series}
  The Kempner sum for base $b$ and excluded set of digits $E$ has value:
  \begin{equation*}
    K(b,E) = \frac{b}{\#E}\log(b) 
              -\sum_{m=1}^\infty\frac{\zeta(m+1)\sum_{d\in F}v_m(d)}{b^{m+1}}
  \end{equation*}
  where $F$ is the set $b-E$ modulo $b$ and 
  \begin{equation*}
    v_m(d) = \int_{[0,1)}(d-x)^m\mu_E(\dx)
  \end{equation*}
\end{theorem}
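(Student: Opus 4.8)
The plan is to expand the integrand of the integral representation supplied by the Proposition immediately preceding the statement,
\[
  K = \frac{b}{\#E}\log(b)
      - \int_{[0,1)}\frac1b\sum_{d\in F}\bigl(\psi(1)-\psi(1 - \tfrac{d-x}b)\bigr)\mu_E(\dx),
\]
by means of the cited power series $\psi(1)-\psi(u)=\sum_{m\geq1}\zeta(m+1)(1-u)^m$, applied with $u = 1 - \frac{d-x}{b}$, so that $(1-u)^m = (d-x)^m/b^m$. Then one integrates term by term and collects the powers of $b$.

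The first point to settle is that the expansion is valid everywhere on the domain of integration. For $d\in F\subset\sD=\{0,\dots,b-1\}$ and $x\in[0,1)$ one has $d-x\in(-1,b-1]$, hence $\bigl|\tfrac{d-x}{b}\bigr|\leq 1-\tfrac1b<1$; so the series converges normally, uniformly in $d\in F$ and $x\in[0,1)$, with $\eta = 1/b$ in the convergence statement quoted above. The only step that deserves a word of justification is the interchange of the summation over $m$ with the integration against $\mu_E$. This is immediate: $\mu_E$ has finite total mass $b/\#E$ (recalled in Section 2), and the partial sums $S_M(d,x)=\sum_{m=1}^M\zeta(m+1)(d-x)^m b^{-m}$ converge uniformly, over the finite set $F$ and over $x\in[0,1)$, to $\psi(1)-\psi(1-\tfrac{d-x}b)$; consequently $\int_{[0,1)}S_M(d,x)\,\mu_E(\dx)\to\int_{[0,1)}\bigl(\psi(1)-\psi(1-\tfrac{d-x}b)\bigr)\mu_E(\dx)$, while $\int_{[0,1)}S_M(d,x)\,\mu_E(\dx)=\sum_{m=1}^M\zeta(m+1)b^{-m}v_m(d)$ by the very definition of $v_m(d)$.

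Performing the interchange and summing over $d\in F$ then gives
\[
  \int_{[0,1)}\frac1b\sum_{d\in F}\bigl(\psi(1)-\psi(1-\tfrac{d-x}b)\bigr)\mu_E(\dx)
    = \sum_{m=1}^\infty\frac{\zeta(m+1)\sum_{d\in F}v_m(d)}{b^{m+1}},
\]
and substituting this into the Proposition yields the asserted identity. So the proof is essentially a direct substitution, and the only (minor) obstacle is the convergence bookkeeping just described, which is handled by uniform convergence of the $\psi$‑series on a fixed compact subinterval of $(-1,1)$ together with the finiteness of the measure $\mu_E$.
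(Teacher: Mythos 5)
Your proof is correct and follows the paper's own route exactly: the paper likewise obtains Theorem \ref{thm:series} by inserting the expansion $\psi(1)-\psi(u)=\sum_{m\geq1}\zeta(m+1)(1-u)^m$ (with $1-u=(d-x)/b$, valid since $|d-x|/b\leq 1-1/b$) into the preceding Proposition and integrating term by term against the finite measure $\mu_E$. Your explicit bookkeeping of the uniform convergence and the interchange of sum and integral is just a spelled-out version of the normal-convergence remark the paper relies on.
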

It is amusing to check that the formula does work also when $A$ is empty or
the singleton $\{0\}$.
Then $\mu_E$ reduces to $c \delta_0$ with $c=b/\#E$, hence
$v_m(d)=c d^m$,
\begin{equation*}
  \sum_{m=1}^\infty\frac{\zeta(m+1)d^m}{b^{m+1}} =
  \sum_{n=1}^\infty\sum_{m=1}^\infty\frac{d^m}{(nb)^{m+1}}= \sum_{n=1}^\infty\Bigl(\frac{1}{nb-d}-\frac{1}{nb}\Bigr)
\end{equation*}
and after summing from $d=0$ (which can always be added) to $b-1$ we end up with $\lim\;
(H_{nb}-H_n)=\log(b)$, hence the formula correctly predicts $K(b,E)=0$.

One always has for $d\geq1$ the lower bound $v_m(d)\geq d^m \mu_E(\{0\})$ with
$\mu_E(\{0\})$ being $1$ or $b/(b-1)$ depending on whether $0\in E$ or not.  So
with $d_1 = \max F$ and except if $E=\{0\}$, the series (even putting aside
the problem of needing the values $\zeta(n)$, $n\geq2$) converges no better
than a sum of powers of $d_1/b$ (for example if $E=\{d\}$ is a singleton with
$d>0$ one has $d_1=b-d$ and if $d$ is fixed and $b$ large we obtain a poorly
convergent series).  In contrast, the ``level $2$'' series of
\cite{burnolkempner} converge as fast as sums of powers of $\pm1/b$ and ``level
$3$'' series have an efficiency comparable to a geometric series with ratio
$\pm1/b^2$.  Further manipulations would be needed here to obtain such
efficiency.

The two most favourable cases (if using Theorem \ref{thm:series} with no further
elaborations) are $E=\{0\}$ or $E=\{b-1\}$, which we discuss briefly now.

If $E=\{0\}$, then $F=\{0\}$ and $v_m(0)=(-1)^mu_m$, $u_m=\mu_E(x^m)$, and:
\begin{equation}\label{eq:Kb0}
  K(b,\{0\}) = b\log(b) + \sum_{m=1}^\infty (-1)^{m-1}\frac{\zeta(m+1)u_m}{b^{m+1}}
\end{equation}
The sequence $(u_m)$ is positive and strictly decreasing, so we get an
alternating series with decreasing absolute values.  It is also known from
\cite[Prop. 10]{burnolkempner} that $1 < (m+1)u_m < b$.  So the speed of
convergence is roughly like the one of an alternating geometric series of
ratio $1/b$.  And furthermore we get ``explicit'' lower and upper bounds such as:
\begin{equation}\label{eq:Kb0bounds}
  b\log(b) + \frac{\zeta(2) u_1}{b^2} - \frac{\zeta(3)u_2}{b^3} < K(b,0) < 
b\log(b) + \frac{\zeta(2) u_1}{b^2} - \frac{\zeta(3)u_2}{b^3} + \frac{\zeta(4)u_3}{b^4}
\end{equation}
According to \cite[Prop. 8]{burnolkempner}, the recurrence relation of
the $u_m$'s in this case  $E=\{0\}$ is:
\begin{equation}
  (b^{m+1} - b + 1) u_m = \sum_{j=1}^m \binom{m}{j}(\sum_{1\leq a <b}a^j )u_{m-j}
\end{equation}
One deduces from it (using $u_0=b$):
\begin{align}
  \frac{u_1}{b^2} &=\frac1{2b} - \frac1{2b(b^2-b+1)}\\
  \frac{u_2}{b^3} &= \frac1{3b^2} -\frac{5b^2-5b+2}{6b^2(b^2-b+1)(b^3-b+1)}
\end{align}
Using now the bound $0<u_3<b/4$ we get a concrete estimate of
$K(b,\{0\})$ with an error bounded by $\zeta(4)/(4b^3)$.  And as from the above
clearly $u_1/b^2 = 1/(2b) + O(1/b^3)$ and $u_2/b^3 = 1/(3b^2) + O(1/b^3)$ we
obtain
\begin{equation}
  K(b,\{0\}) = b\log(b)+\frac{\zeta(2)}{2b}-\frac{\zeta(3)}{3b^2} + O(b^{-3})
\end{equation}
More generally, keeping terms up to $m=M$ in the series \eqref{eq:Kb0} the
error is bounded by the first ignored term so is $O(b^{-M-1})$ as $0\leq
u_{M+1} \leq b$.  Each contribution of the partial sum is a rational function
of $c=b^{-1}$ and it is known from \cite{burnolkempner} that $c u_m$ is
regular at $c=0$ (with value $1/(m+1)$, so the $m$\textsuperscript{th} term is
$\sim (-1)^{m-1}\zeta(m+1) c^m/(m+1)$.  Hence the existence of a full
asymptotic expansion in descending powers of $b$ to all orders.

With a bit more effort we could derive \eqref{eq:2} inclusive of its $b^{-3}$
term, but we refer the reader to \cite{burnolasymptotic} for the formulas in
terms of zeta values for the first five coefficients.

Apart from $E=\{0\}$ the most favourable situation is when $E=\{b-1\}$ (which
was the case with $b=10$ in Fischer work \cite{fischer}).  Then $F = \{1\}$
and the  series from Theorem \ref{thm:series} is
\begin{equation}\label{eq:Kbmoinsun}
  K(b,\{b-1\}) = b\log(b) - \sum_{m=1}^\infty \frac{\zeta(m+1)v_m(1)}{b^{m+1}}
\end{equation}
The coefficients $v_m(1)=\int_{[0,1)}(1-x)^m\mu_E(\dx)$ are bounded above by
$b$ and below (from the mass at the origin) by $b/(b-1)$.  The series has
about the same speed of convergence as a geometric series with ratio $1/b$.
This sequence $(v_m(1))$ is a particular case of ``complementary moments'' as
considered in \cite[\S7]{burnolirwin}, let's shorten the notation to
$(v_m)$. The recurrence in this case with a single excluded digit $d=b-1$ (so
$d'=0$ in the notation of \cite[\S7]{burnolirwin}) is:
\begin{equation}
  (b^{m+1} - b + 1) v_m = b^{m+1} + \sum_{j=1}^m \binom{m}{j}(\sum_{1\leq a <b}a^j )v_{m-j}
\end{equation}
and one obtains in particular:
\begin{align}
  \frac{v_1}{b^2} &=\frac1{2b} + \frac {2b-1}{2b ( {b}^{2}-b+1 ) }\\
  \frac{v_2}{b^3} &= \frac1{3b^2} + \frac
    {6{b}^{4}-5{b}^{2}+5b-2}{6{b}^{2} ( {b}^{2}-b+1 )( {b}^{3}-b+1) }
\end{align}
The contribution to \eqref{eq:Kbmoinsun} of the terms with $m\geq3$ is
$O(b^{-3})$.  Using the above we have $v_1/b^2 = 1/(2b) + 1/b^2 + O(1/b^3)$
and $v_2/b^3 = 1/(3b^2) + O(1/b^3)$ so we obtain
\begin{equation}
  K(b,\{b-1\}) = b\log(b) - \frac{\zeta(2)}{2b} - \frac{3\zeta(2)+\zeta(3)}{3b^2}
  + O(b^{-3})
\end{equation}
With some more effort we could obtain the third term of \eqref{eq:3} as stated
in the introduction, thus reproducing our earlier result from
\cite{burnollargeb}.  Using $0\leq v_m\leq v_0=b$ we obtain that for any $M$,
keeping the contributions from $m=1$ to $m=M$, the error in the approximation
to $K(b,\{b-1\})$ is $O(b^{-M-1})$.  And each such contribution is a rational
function in $b$, which, as a function of $c=b^{-1}$ is equivalent to
$\zeta(m+1)c^m/(m+1)$ for $c\to0$.  Hence the existence of an asymptotic
expansion to all orders in descending powers of $b$, a result which improves
upon \cite{burnollargeb}.

A more thorough expansion of these ideas, for fixed $d\in\sD=\{0,\dots,b-1\}$,
is the topic of \cite{burnolasymptotic}, which uses Theorem \ref{thm:main} as
starting point rather than Theorem \ref{thm:series}.

\section{Two types of linear recurrences for \texorpdfstring{$(v_m(d))$}{(vm(d))}}

We examine the quantities $v_m(d) = \int_{[0,1)} (d-x)^m\mu_E(\dx)$ for $d\in F$.  We can
of course expand in powers of $x$ or $1-x$ hence obtain expressions as linear
combinations of the $u_m=\int_{[0,1)} x^m\mu_E(\dx)$ and complementary moments
$v_m=\int_{[0,1)} (1-x)^m\mu_E(\dx)$ as considered in \cite{burnolkempner} and
\cite{burnolirwin}.  Or we can apply the integration lemma \ref{eq:intg} which gives
\begin{align}
  v_m(d) &= d^m + \frac1b \sum_{a\in A} \int_{[0,1)} (d - \frac{a+x}{b})^m\mu_E(\dx)\notag\\
&= d^m + b^{-m-1}\int_{[0,1)}\sum_{a\in A}(bd-a-d+d-x)^m\mu_E(dx)\notag\\
&=d^m+b^{-m-1}\sum_{j=0}^m \binom{m}{j}\bigl(\sum_{a\in A}(bd-a-d)^j\bigr) v_{m-j}(d)\notag\\
(b^{m+1} - \#A)v_m(d) &= b^{m+1}d^{m} + \sum_{j=1}^m \binom{m}{j}\bigl(\sum_{a\in A}(bd-a-d)^j\bigr) v_{m-j}(d)\label{eq:recurr1}
\end{align}
which expresses $v_m(d)$ in terms of the $v_{m-j}(d)$ with
$1\leq j\leq m$.  In passing the recursion confirms $v_0(d) = b/(b-\# A) =
b/\#E$.

We can also regroup the $v_m(d)$'s in a generating function
$F(t) = \sum \frac{v_m(d)}{m!} t^m$:
\begin{equation*}
  F(t) = \int_{[0,1)} e^{t(d-x)}\mu_E(\dx)
\end{equation*}
If we apply the integration lemma \eqref{eq:intg} we end up with a functional
equation which is equivalent to \eqref{eq:recurr1}.
\begin{align*}
  F(t) &= e^{td}+\frac1b\sum_{a\in A}\int_{[0,1)} e^{t(d-\frac ab - \frac xb)}\mu_E(\dx)
\notag\\
       &= e^{td} + \frac1b \sum_{a\in A}e^{t(d-\frac ab)}e^{-t\frac db}F(\frac tb)
\notag\\
&=e^{td} + \frac1b \sum_{a\in A}e^{t\frac{bd-a-d}b}F(\frac tb)
\end{align*}
Originally Kempner sums were considered with $E$ a singleton and we would
like to have a summation over $E$ rather than $A$, hence:
\begin{align}
F(t)&=e^{td}+  \frac1b \sum_{0\leq i<b}e^{t\frac{bd-i-d}b}F(\frac tb)
                 - \frac1b \sum_{a\in E}e^{t\frac{bd-a-d}b}F(\frac tb)
\notag\\
(e ^{t/b}-1)F(t)&= e^{td}(e^{t/b}-1)
\begin{aligned}[t]
  &+ \left(e^{t\frac{bd+1-d}b} - e^{t\frac{(b-1)(d-1)}b}\right.\\
  &- \left.\sum_{a\in E}e^{t\frac{bd-a-d}b}(e^{t/b}-1)\right)\frac1b F(\frac tb)
\end{aligned}
\label{eq:recurr2gf}
\end{align}
The situation is very favourable when $E=\{b-1\}$.  Then $d=1$, and there is
only a single $a\in E$ which is $b-d$ so $bd-a-d=0$.  Let's work out this
case:
\begin{equation*}
  (e ^{t/b}-1)F(t)= e^{t}(e^{t/b}-1)
+ \Bigl(e^{t} - 1 - (e^{t/b}-1)\Bigr)\frac1b F(\frac tb)
\end{equation*}
Expanding in powers of $t$ gives now the following relations among the
$c_m=v_m(1)$ for this special case:
\begin{gather}
  \sum_{j=1}^{m}\binom{m}{j}b^{-j}c_{m-j}=
(1+\frac1b)^{m}-1 + \sum_{j=1}^m\binom{m}{j}(1  - b^{-j})b^{-(m-j+1)}c_{m-j}
\notag\\
 \sum_{j=1}^{m}\binom{m}{j}(b^{m+1-j}-b^j + 1)c_{m-j} 
= b((b+1)^m - b^m)
\label{eq:recurrspecial}
\end{gather}
The general case \eqref{eq:recurr2gf} gives the following relations among the
$v_m(d)$, for each $d\in F$:
\begin{equation*}
b((db+1)^m - (db)^m)
=  \sum_{j=1}^{m}\binom{m}{j}\Bigl(
\begin{aligned}[t]
  &b^{m+1-j}\\
  &-(bd+1-d)^j+(b-1)^j(d-1)^j\\
  &+\sum_{a\in E} \bigl((bd-a-d+1)^j-(bd-a-d)^j\bigr)
  \Bigr)v_{m-j}(d)
\end{aligned}
\end{equation*}
Here is thus the general explicit recurrence complementary to \eqref{eq:recurr1}.
\begin{align}\label{eq:recurr2}
  (m+1)&(b^{m+1}-\#A) v_m(d) = 
   b((db+1)^{m+1} - (db)^{m+1}) \\\notag
&-\sum_{j=1}^m\binom{m+1}{j+1}\Bigl(
\begin{aligned}[t]
  &b^{m+1-j}\\
  &-(bd+1-d)^{j+1}+(b-1)^{j+1}(d-1)^{j+1}\\
  &+\sum_{a\in E} \bigl((bd-a-d+1)^{j+1}-(bd-a-d)^{j+1}\bigr)\\
  &\Bigr)v_{m-j}(d)
\end{aligned}
\end{align}
We never used $d\in F$ so \eqref{eq:recurr2} (and \eqref{eq:recurr1}) hold for all
$d$'s even non-integers.

\section{Concluding remarks}

Consider the case $b=10$, $A=\sD\setminus\{9\}$.  Then $K=K(10,\{9\})$ is the
original Kempner sum \cite{kempner} and the Theorem \ref{thm:series} says
\begin{equation*}
K = 10 \log(10) - \sum_{m=1}^\infty \frac{\zeta(m+1)v_m}{10^{m+1}}  
\end{equation*}
where the sequence $(v_m)$ is defined as $v_m =
\int_{[0,1)}(1-x)^m\mu_E(\dx)$.  This looks like the ``interessante Resultat
1'' \cite[eq. (8)]{fischer} obtained by Fischer in 1993, and will prove being
exactly it once one knows that the $v_m$'s here, i.e.\@ the $c_m$'s verifying
\eqref{eq:recurrspecial} from the previous section, are the same as the
$\beta_m$'s from \cite{fischer}.  And indeed the special recurrence
\eqref{eq:recurrspecial} is exactly \cite[eq. (7)]{fischer} (the initial term
$\beta_0$ is determined by the formula itself).

Besides, one convinces oneself indeed easily that the Fischer linear form
``$l(f)=(1-A)^{-1}(f)(0)$'' on $C([0,1])$ from \cite[Sect. 2]{fischer} is
nothing else than integration on $[0,1)$ against the measure $\mu_E$ from
\cite{burnolkempner}, specialized to $b=10$ and $E=\{9\}$.

\providecommand\bibcommenthead{}
\def\blocation#1{\unskip}

\end{document}